\newtheorem{theorem}{Theorem}[section]
\newtheorem{lemma}[theorem]{Lemma}
\theoremstyle{definition}
\newtheorem{question}[theorem]{Question}
\DeclareMathOperator{\con}{con}
\renewcommand*\subjclass[2][2010]{\def\@subjclass{#2}\@ifundefined{subjclassname@#1}{\ClassWarning{\@classname}{Unknown edition (#1) of Mathematics Subject Classification; using '2010'.}}{\@xp\let\@xp\subjclassname\csname subjclassname@#1\endcsname}}
\renewcommand{\subjclassname}{\textup{2010} Mathematics Subject Classification}
\begin{document}

\title[Cancellable elements of the lattice of semigroup varieties]{Cancellable elements of the lattice\\
of semigroup varieties}

\thanks{The work is partially supported by Russian Foundation for Basic Research (grant 17-01-00551) and by the Ministry of Education and Science of the Russian Federation (project 1.6018.2017).}

\author{S.\,V.\,Gusev}

\address{Ural Federal University, Institute of Natural Sciences and Mathematics, Lenina 51, 620000 Ekaterinburg, Russia}

\email{\{sergey.gusb,dmitry.skokov,bvernikov\}@gmail.com}

\author{D.\,V.\,Skokov}

\author{B.\,M.\,Vernikov}

\begin{abstract}
We completely determine all commutative semigroup varieties that are cancellable elements of the lattice \textbf{SEM} of all semigroup varieties. In particular, we verify that a commutative semigroup variety is a cancellable element of the lattice \textbf{SEM} if and only if it is a modular element of this lattice.
\end{abstract}

\keywords{Semigroup, variety, cancellable element of a lattice, modular element of a lattice}

\subjclass{Primary 20M07, secondary 08B15}

\maketitle

\section{Introduction and summary}
\label{intr}

The collection of all semigroup varieties forms a lattice with respect to class-theoretical inclusion. This lattice is denoted by \textbf{SEM}. The lattice \textbf{SEM} has been intensively studied since the beginning of 1960s. A systematic overview of the material accumulated here is given in the survey~\cite{Shevrin-Vernikov-Volkov-09}. There are a number of article devoted to an examination of special elements of different types in the lattice \textbf{SEM} (see~\cite[Section~14]{Shevrin-Vernikov-Volkov-09} or the recent survey~\cite{Vernikov-15} devoted specially to this subject). The present article continues these investigations.

In the lattice theory, special elements of many different types are considered. We recall definitions of three types of elements that appear below. An element $x$ of a lattice $\langle L;\vee,\wedge\rangle$ is called \emph{neutral} if
$$
(\forall y,z\in L)\quad (x\vee y)\wedge(y\vee z)\wedge(z\vee x)= (x\wedge y)\vee(y\wedge z)\vee(z\wedge x).
$$
It is well known that an element $x$ is neutral if and only if, for all $y,z\in L$, the sublattice of $L$ generated by $x$, $y$ and $z$ is distributive (see~\cite[Theorem~254]{Gratzer-11}). Neutral elements play an important role in the general lattice theory (see~\cite[Section~III.2]{Gratzer-11}, for instance). An element $x \in L$ is called
\begin{align*}
&\text{\emph{modular} if}&&(\forall y,z\in L)\quad y\le z\longrightarrow(x\vee y)\wedge z=(x\wedge z)\vee y,\\
&\text{\emph{cancellable} if}&&(\forall y,z\in L)\quad x\vee y=x\vee z\ \&\ x\wedge y=x\wedge z\longrightarrow y=z.
\end{align*}
It is easy to see that any cancellable element is a modular one. A valuable information about modular and cancellable elements in abstract lattices can be found in~\cite{Seselja-Tepavcevic-01}, for instance.

Modular elements of the lattice \textbf{SEM} were examined in~\cite{Jezek-McKenzie-93,Shaprynskii-12,Vernikov-07}. In particular, commutative semigroup varieties that are modular elements of \textbf{SEM} are completely determined in~\cite[Theorem~3.1]{Vernikov-07}. Here we describe commutative semigroup varieties that are cancellable elements of \textbf{SEM}. In particular, we verify that, for commutative varieties, the properties of being modular and cancellable elements are equivalent.

To formulate the main result of the article, we need some notation. We denote by $F$ the free semigroup over a countably infinite alphabet. As usual, elements of $F$ are called \emph{words}. Words unlike variables are written in bold. Two parts of an identity we connect by the symbol~$\approx$, while the symbol~$=$ denotes the equality relation on $F$. Note that a semigroup $S$ satisfies the identity system $\mathbf wx\approx x\mathbf w\approx\mathbf w$ where the variable $x$ does not occur in the word \textbf w if and only if $S$ contains a zero element~0 and all values of \textbf w in $S$ are equal to~0. We adopt the usual convention of writing $\mathbf w\approx 0$ as a short form of such a system and referring to the expression $\mathbf w\approx 0$ as to a single identity. We denote by \textbf T the trivial semigroup variety and by \textbf{SL} the variety of all semilattices.

The main result of the article is the following

\begin{theorem}
\label{main}
For a commutative semigroup variety $\mathbf V$, the following are equivalent:
\begin{itemize}
\item[\textup{a)}] $\mathbf V$ is a cancellable element of the lattice $\mathbf{SEM}$;
\item[\textup{b)}] $\mathbf V$ is a modular element of the lattice $\mathbf{SEM}$;
\item[\textup{c)}] $\mathbf{V=M\vee N}$ where $\mathbf M$ is one of the varieties $\mathbf T$ or $\mathbf{SL}$, while $\mathbf N$ is a variety satisfying the identities $x^2y\approx 0$ and $xy\approx yx$.
\end{itemize}
\end{theorem}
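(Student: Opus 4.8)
The plan is to prove the cycle of implications a) $\Rightarrow$ b) $\Rightarrow$ c) $\Rightarrow$ a), using that the first two are essentially already available. The implication a) $\Rightarrow$ b) requires no work: as noted in the Introduction, every cancellable element of any lattice is modular. For b) $\Leftrightarrow$ c) I would invoke the classification of commutative modular elements in \cite[Theorem~3.1]{Vernikov-07}; the only thing to check is that the list given there coincides with the description in c), which is a direct comparison of identity bases (rewriting the conditions of the cited theorem in the form ``$\mathbf{V}=\mathbf M\vee\mathbf N$ with $\mathbf M\in\{\mathbf T,\mathbf{SL}\}$ and $\mathbf N$ commutative satisfying $x^2y\approx 0$''). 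Thus the entire burden of the proof falls on the implication c) $\Rightarrow$ a).

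To prove c) $\Rightarrow$ a) I would pass to equational theories. Writing $T(\mathbf K)$ for the fully invariant congruence on $F$ determined by a variety $\mathbf K$, cancellability of $\mathbf V$ amounts to the following: whenever $T(\mathbf V)\cap T(\mathbf X)=T(\mathbf V)\cap T(\mathbf Y)$ and $\langle T(\mathbf V)\cup T(\mathbf X)\rangle=\langle T(\mathbf V)\cup T(\mathbf Y)\rangle$, then $T(\mathbf X)=T(\mathbf Y)$. By symmetry it suffices to take an identity $u\approx v$ holding in $\mathbf X$ and show it holds in $\mathbf Y$. If $u\approx v$ also holds in $\mathbf V$, then $(u,v)\in T(\mathbf V)\cap T(\mathbf X)=T(\mathbf V)\cap T(\mathbf Y)$ and we are done immediately. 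Hence everything reduces to the identities $u\approx v$ that hold in $\mathbf X$ but fail in $\mathbf V$: for these I know only that $u\approx v$ is derivable from $T(\mathbf V)\cup T(\mathbf Y)$, and I must upgrade this to derivability from $T(\mathbf Y)$ alone.

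This last step is the heart of the argument and, I expect, the main obstacle. The difficulty is that $\mathbf X$ and $\mathbf Y$ range over all of $\mathbf{SEM}$, not merely over commutative varieties, so the strong identities of $\mathbf V$ must be played off against an essentially arbitrary partner. The key structural input is that a variety $\mathbf N$ with $x^2y\approx 0$ is a commutative nil-variety: modulo commutativity every word of degree $\ge 3$ that is not square-free equals $0$, so its nonzero words are among the square-free words and the one-variable squares $x^2$, and all of its proper identities either annihilate a word ($\mathbf w\approx 0$) or equate two such short nonzero words. I would then analyse a derivation $u=w_0\to w_1\to\cdots\to w_n=v$ mixing steps from $T(\mathbf V)$ and from $T(\mathbf Y)$ and show that the $T(\mathbf V)$-steps can be eliminated: because $u\approx v$ fails in $\mathbf V$, the words $u$ and $v$ cannot both be annihilated by $\mathbf V$, and a confluence/normalisation argument relates $u$ and $v$ using $T(\mathbf Y)$-steps only. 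Carrying this out is exactly the place where the precise shape of $\mathbf V$ allowed by c) is used.

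Finally I would organise the computation by the two cases $\mathbf M=\mathbf T$ and $\mathbf M=\mathbf{SL}$. The case $\mathbf M=\mathbf T$, in which $\mathbf V=\mathbf N$ is itself a commutative nil-variety, is the base case and contains the core elimination argument above. The case $\mathbf M=\mathbf{SL}$ I would reduce to the first by exploiting the good behaviour of the operator $\mathbf K\mapsto\mathbf{SL}\vee\mathbf K$: the identities $u\approx v$ holding in $\mathbf{SL}$ are precisely those with the same set of letters in $u$ and $v$, so joining with $\mathbf{SL}$ only adds control over the supports of words and interacts transparently with the nil-part $\mathbf N$. Combining the two cases yields $T(\mathbf X)=T(\mathbf Y)$, hence $\mathbf X=\mathbf Y$, establishing that every variety of the form in c) is cancellable and closing the cycle.
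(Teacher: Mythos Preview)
Your overall architecture---(a)$\Rightarrow$(b) trivial, (b)$\Leftrightarrow$(c) quoted from \cite{Vernikov-07}, (c)$\Rightarrow$(a) the real work, and reducing $\mathbf M=\mathbf{SL}$ to $\mathbf M=\mathbf T$ via the good behaviour of $\mathbf{SL}$---matches the paper; the paper formalises the $\mathbf{SL}$-reduction by an abstract lemma that in any lattice with $0$, joining with a neutral atom preserves and reflects cancellability.

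Where you diverge is the core case $\mathbf V=\mathbf N$. You propose direct derivation-elimination: take $u\approx v$ in $T(\mathbf X)\setminus T(\mathbf N)$, write a mixed $T(\mathbf N)\cup T(\mathbf Y)$ derivation, and argue by confluence/normalisation that the $T(\mathbf N)$-steps can be removed. The paper does \emph{not} do this. It argues by contradiction after three preparatory reductions you do not anticipate: (i) using modularity of $\mathbf N$ it replaces $\mathbf N$ by a subvariety $\mathbf N'$ with $\mathbf N'\vee\mathbf Y=\mathbf N'\vee\mathbf Z=\mathbf Y\vee\mathbf Z$ (an abstract lattice lemma); (ii) via another neutral-atom lemma it assumes $\mathbf Y,\mathbf Z\supseteq\mathbf{SL}$, forcing $\con(\mathbf u)=\con(\mathbf v)$; (iii) it proves $\deg(\mathbf Y)=\deg(\mathbf Z)$. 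Only then is the witness identity normalised to $\mathbf u\in\{x^2,\,x_1\cdots x_k\}$, and the argument splits: the $x^2\approx x^m$ case is a short period comparison, while the $x_1\cdots x_k\approx\mathbf v$ case exploits the canonical ``degree $\le n$'' form $x_1\cdots x_n\approx x_1\cdots x_{i-1}(x_i\cdots x_j)^\ell x_{j+1}\cdots x_n$ and explicit substitutions to transport such an identity from $\mathbf Y$ to $\mathbf Z$.

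Your confluence idea may in principle succeed, but as written it is only a hope, and the sketch does not locate the actual obstacle. The hard instance is $\mathbf u=x_1\cdots x_k$ linear and $\mathbf v$ strictly longer with the same content: from $\mathbf u$ the $T(\mathbf N)$-steps are just permutations, but among longer words $T(\mathbf N)$ freely identifies any two ``zero'' words, and it is not clear how to normalise such steps away without precisely the degree control and canonical-form analysis the paper carries out. Either make the elimination argument concrete (in particular say how you handle the linear-word case), or adopt the paper's route via the $\mathbf N'$-reduction and the degree invariant.
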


It can be verified by fairly easy calculations that any proper subvariety of the variety \textbf W given by the identities $x^2y\approx 0$ and $xy\approx yx$ is given within \textbf W either by the identity $x^2\approx 0$ or by the identity $x_1x_2\cdots x_n\approx 0$ for some natural $n$ or by both these identities. Thus, in actual fact, Theorem~\ref{main} gives an exhaustive list of the varieties we consider.

The article consists of three sections. Section~\ref{prel} contains an auxiliary facts, while Section~\ref{proof} is devoted to verification of Theorem~\ref{main}.

\section{Preliminaries}
\label{prel}

\subsection{Preliminaries on lattices}
\label{prel lat}

We start with several observations dealing with cancellable or modular elements in abstract lattices.

\begin{lemma}
\label{join with neutral atom}
Let $L$ be a lattice with~$0$ and $a$ an atom and neutral element of $L$. An element $x \in L$ is cancellable if and only if the element $x \vee a$ is cancellable.
\end{lemma}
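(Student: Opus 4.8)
The plan is to prove both directions of the equivalence by exploiting the definition of cancellable element together with the special structure of $a$ as a neutral atom. I would set $b = x \vee a$ and analyze how the equations $b \vee y = b \vee z$ and $b \wedge y = b \wedge z$ relate to the corresponding equations for $x$. Neutrality of $a$ gives strong distributivity: for any $y,z$, the sublattice generated by $a,y,z$ is distributive, and in particular identities such as $a \wedge (y \vee z) = (a \wedge y) \vee (a \wedge z)$ and $a \vee (y \wedge z) = (a \vee y) \wedge (a \vee z)$ hold freely. The fact that $a$ is an atom means $a \wedge y \in \{0, a\}$ for every $y$, so each element either lies above $a$ or meets it in $0$; this dichotomy is what I expect to drive the case analysis.

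For the forward direction, suppose $x$ is cancellable and that $b \vee y = b \vee z$ and $b \wedge y = b \wedge z$; I want to deduce $y = z$. The natural strategy is to produce from $y$ and $z$ two new elements $y'$ and $z'$ that witness cancellability of $x$, forcing $y' = z'$, and then to recover $y = z$ using neutrality. A promising choice is $y' = y \vee a$ and $z' = z \vee a$ (or dually, meeting with $a$): one computes $x \vee y' = x \vee a \vee y = b \vee y = b \vee z = x \vee z'$ immediately, and the meet side $x \wedge y' = x \wedge z'$ should follow from the hypotheses via the distributive laws available for $a$. Cancellability of $x$ then yields $y \vee a = z \vee a$. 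Running the dual argument with meets gives $y \wedge a = z \wedge a$, and then applying cancellability of the neutral atom $a$ itself (a neutral element is cancellable) collapses these to $y = z$.

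For the converse, suppose $b = x \vee a$ is cancellable; I want $x$ cancellable. Assume $x \vee y = x \vee z$ and $x \wedge y = x \wedge z$. Joining both given equations with $a$ and using that $a$ is neutral (hence distributes over the relevant meets and joins), I would manufacture the equalities $b \vee (y \vee a) = b \vee (z \vee a)$ and $b \wedge (y \vee a) = b \wedge (z \vee a)$, invoke cancellability of $b$ to get $y \vee a = z \vee a$, and symmetrically obtain $y \wedge a = z \wedge a$. Cancellability of $a$ again finishes the argument.

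The main obstacle I anticipate is verifying the meet-side equation in the forward direction, namely that $x \wedge (y \vee a) = x \wedge (z \vee a)$ genuinely follows from $b \wedge y = b \wedge z$ and $b \vee y = b \vee z$. This is where the interplay of the two hypotheses is subtlest, and where the atom property ($a \wedge y \in \{0,a\}$) together with the full distributivity guaranteed by neutrality must be combined carefully; a clean treatment will likely split into the cases $y \ge a$ and $y \wedge a = 0$ (and the parallel cases for $z$), checking that the join hypothesis forces $y$ and $z$ onto the same side of this dichotomy so that the two witness elements coincide.
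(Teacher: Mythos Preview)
Your proposal is correct and follows essentially the same route as the paper: both arguments rest on the distributivity furnished by neutrality of $a$, the dichotomy $a\wedge y\in\{0,a\}$ coming from the atom hypothesis, and the cancellability of $a$ itself, with the decisive work happening in a case split on whether $a\le y$ and $a\le z$. The paper organizes the cases slightly differently---it works directly with $y,z$, establishing $x\wedge y=x\wedge z$ and $x\vee y=x\vee z$ (resp.\ $y\wedge(x\vee a)=z\wedge(x\vee a)$) in each case and then applying cancellability of $x$ (resp.\ $x\vee a$) once---whereas you first pass to $y\vee a,\,z\vee a$ and use cancellability of $a$ as a final step; but the underlying computations are the same, and the ``main obstacle'' you single out is exactly where the paper's case analysis sits. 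One small remark: your phrases ``dual argument'' and ``symmetrically'' for obtaining $y\wedge a=z\wedge a$ are a bit loose---this equality is not really obtained by rerunning the argument with meets in place of joins, but follows almost immediately from the hypotheses (e.g.\ in the forward direction, $a\le y$ forces $a\le b\wedge y=b\wedge z\le z$), which is also how the paper dispatches the mixed case.
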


\begin{proof}
\emph{Necessity.} Let $x$ be a cancellable element and $y,z\in L$. We need to verify that
$$
y\wedge(x\vee a)=z\wedge(x\vee a)\ \&\ y\vee(x\vee a)=z\vee(x\vee a)\longrightarrow y=z.
$$
If $a\le x$ then this implication is evident because $x\vee a=x$ and $x$ is cancellable. Let now $a\not\le x$. Throughout all the proof we will use the fact that the element $a$ is neutral without explicit references. We can assume without loss of generality that either $a\le y$ and $a\le z$ or $a\nleq y$ and $a\nleq z$ or $a\le y$ but $a\nleq z$.

If $a\le y$ and $a\le z$ then
\begin{align*}
\phantom{\text{and}\rule{1.7cm}{0pt}}(y\wedge x)\vee a&=(y\wedge x)\vee(y\wedge a)=y\wedge(x\vee a)\\
&=z\wedge(x\vee a)=(z\wedge x)\vee(z\wedge a)=(z\wedge x)\vee a
\end{align*}
and
$$
(y\wedge x)\wedge a=y\wedge(x\wedge a)=y\wedge 0=0=z\wedge 0=z\wedge(a\wedge x)=(z\wedge x)\wedge a.
$$
Thus, $(y\wedge x)\vee a=(z\wedge x)\vee a$ and $(y\wedge x)\wedge a=(z\wedge x)\wedge a$. The element $a$ is cancellable because it is neutral. Therefore, $y\wedge x=z\wedge x$. Further,
$$
y\vee x=(y\vee a)\vee x=y\vee(x\vee a)=z\vee(x\vee a)=(z\vee a)\vee x=z\vee x.
$$
Thus, $y\wedge x=z\wedge x$ and $y\vee x=z\vee x$. Since $x$ is cancellable, we have $y=z$.

If $a\nleq y$ and $a\nleq z$ then
\begin{align*}
y\wedge x&=(y\wedge x)\vee 0=(y\wedge x)\vee(y\wedge a)=y\wedge(x\vee a)\\
&=z\wedge(x\vee a)=(z\wedge x)\vee(z\wedge a)=(z\wedge x)\vee 0=z\wedge x.
\end{align*}
Thus, $y\wedge x=z\wedge x$. Further,
$$
(y\vee x)\wedge a=(y\wedge a)\vee(x\wedge a)=0\vee 0=(z\wedge a)\vee(x\wedge a)=(z\vee x)\wedge a.
$$
Thus, $(y\vee x)\wedge a=(z\vee x)\wedge a$. By the hypothesis,
$$
(y\vee x)\vee a=y\vee(x\vee a)=z\vee(x\vee a)=(z\vee x)\vee a.
$$
Since $a$ is neutral and every neutral element is cancellable, we have $y\vee x=z\vee x$. Taking into account that the element $x$ is cancellable, we have that $y=z$.

Finally, if $a\le y$ but $a\nleq z$ then
\begin{align*}
z\wedge x&=(z\wedge x)\vee 0=(z\wedge x)\vee(z\wedge a)=z\wedge(x\vee a)\\
&=y\wedge(x\vee a)=(y\wedge x)\vee(y\wedge a)=(y\wedge x)\vee a.
\end{align*}
Thus, $z\wedge x=(y\wedge x)\vee a$. Then $a\le z\wedge x\le z$, a contradiction.

\medskip

\emph{Sufficiency.} Let $x\vee a$ be a cancellable element and $y,z$ are elements of $L$ with $y\wedge x=z\wedge x$ and $y\vee x=z\vee x$. We have to verify that $y=z$. If $a\le x$ then the desirable conclusion is evident because $x\vee a=x$ and the element $x\vee a$ is cancellable. Let now $a\not\le x$. We note that
$$
y\vee(x\vee a)=(y\vee x)\vee a=(z\vee x)\vee a=z\vee(x\vee a),
$$
i.e., $y\vee(x\vee a)=z\vee(x\vee a)$. Since the element $x\vee a$ is cancellable, it remains to check that $y\wedge(x\vee a)=z\wedge(x\vee a)$. As in the proof of necessity, we can assume without loss of generality that either $a\le y$ and $a\le z$ or $a\nleq y$ and $a\nleq z$ or $a\le y$ but $a\nleq z$.

If $a\le y$ and $a\le z$ then
\begin{align*}
y\wedge(x\vee a)&=(y\vee a)\wedge(x\vee a)=(y\wedge x)\vee a\\
&=(z\wedge x)\vee a=(z\vee a)\wedge(x\vee a)=z\wedge(x\vee a),
\end{align*}
i.e., $y\wedge(x\vee a)=z\wedge(x\vee a)$. If $a\nleq y$ and $a\nleq z$ then
\begin{align*}
y\wedge(x\vee a)&=(y\wedge x)\vee(y\wedge a)=(y\wedge x)\vee 0=y\wedge x\\
&=z\wedge x=(z\wedge x)\vee 0=(z\wedge x)\vee(z\wedge a)=z\wedge(x\vee a),
\end{align*}
i.e., $y\wedge(x\vee a)=z\wedge(x\vee a)$ again. Finally, if $a\le y$ but $a\nleq z$ then
\begin{align*}
a&=a\vee(x\wedge a)=(y\wedge a)\vee(x\wedge a)=(y\vee x)\wedge a\\
&=(z\vee x)\wedge a=(z\wedge a)\vee(x\wedge a)=0\vee 0=0,
\end{align*}
a contradiction.
\end{proof}

\begin{lemma}
\label{over neutral atom}
Let $L$ be a lattice with~$0$, $a$ an atom and neutral element of $L$ and $x\in L$. If, for any $y,z\in L$, the equalities $x\vee(y\vee a)=x\vee(z\vee a)$ and $x\wedge(y\vee a)=x\wedge(z\vee a)$ imply that $y\vee a=z\vee a$ then $x$ is a cancellable element.
\end{lemma}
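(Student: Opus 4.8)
The plan is to establish cancellability of $x$ straight from the definition. I take arbitrary $y,z\in L$ with $x\vee y=x\vee z$ and $x\wedge y=x\wedge z$, and aim to conclude $y=z$. The idea is to produce the two equalities $y\vee a=z\vee a$ and $y\wedge a=z\wedge a$ and then to invoke the fact, recalled earlier, that every neutral element is cancellable: since $a$ is neutral, these two equalities force $y=z$. So the whole argument reduces to manufacturing the join-side and the meet-side equalities for $a$.

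The join-side equality is exactly what the hypothesis of the lemma is designed to supply, once I verify its two premises. The premise $x\vee(y\vee a)=x\vee(z\vee a)$ is immediate by reassociating: both sides equal $(x\vee y)\vee a=(x\vee z)\vee a$. For the premise $x\wedge(y\vee a)=x\wedge(z\vee a)$ I would use that the neutral element $a$ is standard, so that $x\wedge(y\vee a)=(x\wedge a)\vee(x\wedge y)$; replacing $x\wedge y$ by $x\wedge z$ and reading the same identity backwards yields $x\wedge(z\vee a)$. With both premises in place the hypothesis delivers $y\vee a=z\vee a$.

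The meet-side equality $y\wedge a=z\wedge a$ is the more delicate point, and I expect it to be the crux. As $a$ is an atom, each of $y\wedge a$, $z\wedge a$ is either $0$ or $a$, so it is enough to prove $a\le y\Leftrightarrow a\le z$. Arguing by contradiction, suppose $a\le y$ but $a\not\le z$. Then $a\le x\vee y=x\vee z$, and codistributivity of the neutral element $a$ gives $a=a\wedge(x\vee z)=(a\wedge x)\vee(a\wedge z)$; since $a\not\le z$ and $a$ is an atom, $a\wedge z=0$, so $a=a\wedge x$, i.e.\ $a\le x$. But then $a\le x\wedge y=x\wedge z\le z$, contradicting $a\not\le z$. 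The opposite case is symmetric, whence $y\wedge a=z\wedge a$.

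Combining the two steps, I have $y\vee a=z\vee a$ and $y\wedge a=z\wedge a$, and cancellability of the neutral atom $a$ now gives $y=z$, as required. The only real subtlety is that the hypothesis yields only the join-side equality, so the meet-side equality must be extracted separately; this is precisely where the atom property of $a$ together with its codistributivity come into play. Note also that no case distinction on whether $a\le x$ is needed, since each step above remains valid without such an assumption.
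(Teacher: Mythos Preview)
Your proof is correct. The first half coincides with the paper's argument: both verify the two premises of the hypothesis in the same way and deduce $y\vee a=z\vee a$. The difference lies only in the closing step. The paper finishes with a three-case analysis on whether $a\le y$ and $a\le z$, computing $y=z$ directly in each compatible case and deriving a contradiction in the mixed case (splitting further on whether $a\le x$). You instead establish $y\wedge a=z\wedge a$ (your contradiction argument for the mixed case is equivalent to, and slightly slicker than, the paper's two-subcase version) and then invoke the cancellability of the neutral element $a$ once to conclude $y=z$. This repackaging is a mild streamlining: it replaces the explicit distributive computation in the case $a\nleq y,z$ by a single appeal to the known fact that neutral elements are cancellable.
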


\begin{proof}
Let $y,z\in L$, $x\vee y=x\vee z$ and $x\wedge y=x\wedge z$. We need to verify that $y=z$. It is evident that
$$
x\vee(y\vee a)=(x\vee y)\vee a=(x\vee z)\vee a=x\vee(z\vee a).
$$
Since the element $a$ is neutral, we have
$$
x\wedge(y\vee a)=(x\wedge y)\vee(x\wedge a)=(x\wedge z)\vee(x\wedge a)=x\wedge(z\vee a).
$$
In view of the hypothesis, we have that $y\vee a=z\vee a$. We can assume without loss of generality that either $y,z\ngeq a$ or $y,z\ge a$ or $y\ge a$ but $z\ngeq a$. If $y,z\ngeq a$ then we apply the fact that $a$ is neutral and have
\begin{align*}
y&=(y\vee a)\wedge y=(z\vee a)\wedge y=(z\wedge y)\vee(a\wedge y)=(z\wedge y)\vee 0\\
&=(z\wedge y)\vee(z\wedge a)=z\wedge(y\vee a)=z\wedge(z\vee a)=z,
\end{align*}
i.e., $y=z$. If $y,z\ge a$ then $y=y\vee a=z\vee a=z$. Finally, let $y\ge a$ and $z\ngeq a$. If $x\ge a$ then $x\wedge y\ge a$ and $x\wedge z\ngeq a$. Then $x\wedge y\ne x\wedge z$, contradicting the choice of $y$ and $z$. Let now $x\ngeq a$. Then $x\wedge a=0$ and $z\wedge a=0$. Since $a$ is neutral, we have that 
$$
(x\vee z)\wedge a=(x\wedge a)\vee(z\wedge a)=0\vee 0=0,
$$
whence $x\vee z\ngeq a$. On the other hand, $x\vee y\ge a$. Therefore, $x\vee y\ne x\vee z$ that contradicts the choice of $y$ and $z$ again.
\end{proof}

\begin{lemma}
\label{modular non-cancellable}
Let $x$ be a modular but not cancellable element of a lattice $L$ and let $y$ and $z$ be different elements of $L$ such that $x\vee y=x\vee z$ and $x\wedge y=x\wedge z$. Then there is an element $x'\in L$ such that $x'\le x$, $x'\vee y=x'\vee z$, $x'\wedge y=x'\wedge z$ and $y\vee z=x'\vee y$.
\end{lemma}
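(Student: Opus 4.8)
The plan is to exhibit the element $x'$ explicitly and then verify the four required relations directly from the modular law. The natural candidate is $x'=x\wedge(y\vee z)$, which manifestly satisfies $x'\le x$. I would note at the outset that the hypothesis that $x$ is \emph{not} cancellable plays no role in the construction itself: it serves only to guarantee the existence of a pair of distinct elements $y,z$ with $x\vee y=x\vee z$ and $x\wedge y=x\wedge z$, and since such a pair is already handed to us, the argument will rely solely on the modularity of $x$.

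The first thing I would record is that the common join dominates $y\vee z$. Indeed, $x\vee y=x\vee z$ lies above both $y$ and $z$, so $x\vee y=x\vee z\ge y\vee z$. This is the observation that makes the relevant meets with $y\vee z$ collapse in the computations below.

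Next I would compute the two joins $x'\vee y$ and $x'\vee z$ by a single application of the modular law for $x$ in each case. Applying modularity with $y\le y\vee z$ gives
$$
x'\vee y=\bigl(x\wedge(y\vee z)\bigr)\vee y=(x\vee y)\wedge(y\vee z),
$$
and since $x\vee y\ge y\vee z$ the right-hand side is just $y\vee z$. The symmetric computation with $z\le y\vee z$ yields $x'\vee z=(x\vee z)\wedge(y\vee z)=y\vee z$. Hence $x'\vee y=x'\vee z=y\vee z$, which simultaneously delivers the equality $x'\vee y=x'\vee z$ and the condition $y\vee z=x'\vee y$. The meets, by contrast, require no modularity at all: using $y\le y\vee z$ and absorption,
$$
x'\wedge y=\bigl(x\wedge(y\vee z)\bigr)\wedge y=x\wedge y,
$$
and likewise $x'\wedge z=x\wedge z$, so the hypothesis $x\wedge y=x\wedge z$ gives $x'\wedge y=x'\wedge z$.

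I do not expect a genuine obstacle here. The only real decision is guessing the correct candidate $x'=x\wedge(y\vee z)$, after which every clause follows from one application of the modular identity or of absorption. The single point to get right is the direction in which the modular law is invoked — matching it against $y\le y\vee z$ so that the modular element $x$ occupies the correct slot — together with the preliminary remark $x\vee y\ge y\vee z$ that lets the outer meet simplify to $y\vee z$.
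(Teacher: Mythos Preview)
Your proof is correct and follows the same approach as the paper: both take $x'=x\wedge(y\vee z)$ and verify the four relations from the modular law, with identical treatment of the meets. Your computation of $x'\vee y=(x\vee y)\wedge(y\vee z)=y\vee z$ via a single application of modularity (using $y\le y\vee z$ together with $x\vee y\ge y\vee z$) is actually a bit more direct than the paper's, which reaches the same conclusion through a slightly longer chain of auxiliary equalities.
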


\begin{proof}
Put $x'=x\wedge(y\vee z)$. Clearly, $x'\le x$. Note that
$$
x'\wedge y=x\wedge(y\vee z)\wedge y=x\wedge y=x\wedge z=x\wedge(y\vee z)\wedge z=x'\wedge z.
$$
It remains to verify that $x'\vee y=x'\vee z=y\vee z$. Clearly, $x'\le y\vee z$, whence $y\vee x'\le y\vee z$. Then $x\wedge(y\vee z)=x'\le y\vee x'\le y\vee z$, and therefore,
\begin{equation}
\label{(y vee z)wedge x=(y vee x')wedge x}
(y\vee z)\wedge x=(y\vee x')\wedge x.
\end{equation}
Further, the equality $x\vee y=x\vee z$ implies that $z\le y\vee x$. Since $x'\le x$, we have that
$$
(y\vee z)\vee x=(y\vee x)\vee z=y\vee x=y\vee(x'\vee x)=(y\vee x')\vee x.$$
Thus,
\begin{equation}
\label{(y vee z)vee x=(y vee x')vee x}
(y\vee z)\vee x=(y\vee x')\vee x.
\end{equation}
Combining these observations, we have that
\begin{align*}
y\vee z={}&(x\vee(y\vee z))\wedge(y\vee z)&&\\
={}&(x\vee(y\vee x'))\wedge(y\vee z)&&\text{by }\eqref{(y vee z)vee x=(y vee x')vee x}\\
={}&(x\wedge(y\vee z))\vee(y\vee x')&&\text{because }x\text{ is modular and }y\vee x'\le y\vee z\\
={}&(x\wedge(y\vee x'))\vee(y\vee x')&&\text{by }\eqref{(y vee z)wedge x=(y vee x')wedge x}\\
={}&y\vee x'.
\end{align*}
Thus, we prove that $y\vee z=y\vee x'$. Similar arguments allow us to show that $y\vee z=z\vee x'$. Therefore, $y\vee x'=y\vee z=z\vee x'$.
\end{proof}

\subsection{Preliminaries on semigroup varieties}
\label{prel sem var}

Now we return to semigroup varieties. Let \textbf X be a semigroup variety. If nilpotency index of any nil-semigroup in \textbf X is not exceeded some natural number $n$ and $n$ is the least number with such a property then $n$ is called a \emph{degree} of the variety \textbf X and is denoted by $\deg(\mathbf X)$; otherwise we put $\deg(\mathbf X)=\infty$. For a given word \textbf w, we denote by $\ell(\mathbf w)$ the length of \textbf w, and by $\con(\mathbf w)$ the \emph{content} of \textbf w, i.e., the set of all variables occurring in \textbf w. The equivalence of the claims~a) and~c) of the following lemma is verified in~\cite[Proposition~2.11]{Vernikov-08}, the implication c)~$\longrightarrow$~b) is evident, and the implication b)~$\longrightarrow$~a) follows from~\cite[Lemma~1]{Sapir-Sukhanov-81}.

\begin{lemma}
\label{fin deg}
For a semigroup variety $\mathbf V$, the following are equivalent:
\begin{itemize}
\item[\textup{a)}] $\deg(\mathbf V)\le n$;
\item[\textup{b)}] $\mathbf V$ satisfies an identity of the form $x_1x_2\cdots x_n\approx\mathbf v$ for some word $\mathbf v$ with $\ell(\mathbf v)>n$;
\item[\textup{c)}] $\mathbf V$ satisfies an identity of the form
\begin{equation}
\label{=l}
x_1x_2\cdots x_n \approx x_1x_2\cdots x_{i-1}(x_i\cdots x_j)^\ell x_{j+1}\cdots x_n
\end{equation}
for some $\ell>1$ and $1\le i\le j\le n$.\qed
\end{itemize}
\end{lemma}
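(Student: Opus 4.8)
The plan is to close the cycle of implications $(a)\Rightarrow(c)\Rightarrow(b)\Rightarrow(a)$, using throughout the semantic reading of $(a)$: the condition $\deg(\mathbf V)\le n$ holds if and only if every nil-semigroup in $\mathbf V$ satisfies $x_1x_2\cdots x_n\approx 0$, i.e. every product of $n$ elements of such a semigroup equals its zero. The implication $(c)\Rightarrow(b)$ is immediate, since the right-hand side of the identity in $(c)$ has length $n+(j-i+1)(\ell-1)>n$ as $\ell>1$ and $i\le j$, so it already serves as the word $\mathbf v$ demanded in $(b)$. It is also worth recording the transparent converse $(c)\Rightarrow(a)$, which motivates the degree bound: given a nil-semigroup $S\in\mathbf V$ and $a_1,\dots,a_n\in S$, write $p=a_1\cdots a_{i-1}$, $q=a_i\cdots a_j$, $r=a_{j+1}\cdots a_n$, so that $(c)$ reads $pqr=pq^{\ell}r$. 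Substituting $x_i\mapsto q^{m-1}a_i$ and $x_k\mapsto a_k$ ($k\ne i$) into the same identity yields $pq^{m}r=pq^{\ell m}r$, and iterating gives $a_1\cdots a_n=pqr=pq^{\ell^{k}}r$ for all $k\ge 0$; since $q$ is nilpotent, taking $\ell^{k}$ past its index forces $a_1\cdots a_n=0$, whence $S^{n}=0$.

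The first substantial direction is $(a)\Rightarrow(c)$, which I would prove contrapositively: assuming $\mathbf V$ satisfies no identity of the shape in $(c)$, construct a nil-semigroup in $\mathbf V$ of nilpotency index exceeding $n$. A first, easy attempt yields merely \emph{some} length-increasing identity: in the relatively free semigroup $F_{\mathbf V}(x_1,\dots,x_n)$ factor out the Rees ideal of classes containing a word of length $>n$ (this set is saturated and an ideal, since multiplying a long word keeps it long), obtaining a nilpotent member of $\mathbf V$ in which $x_1\cdots x_n$ is nonzero unless $x_1\cdots x_n$ is $\mathbf V$-equivalent to a longer word.

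The real difficulty in $(a)\Rightarrow(c)$ is to upgrade ``a longer word'' to the rigid block-power form of $(c)$. For this one must replace the crude Rees quotient by a finer, $\mathbf V$-saturated congruence whose nonzero classes are exactly the block-power words $x_1\cdots x_{i-1}(x_i\cdots x_j)^{e}x_{j+1}\cdots x_n$, and then check that $x_1\cdots x_n$ survives precisely when no identity of $(c)$ holds in $\mathbf V$. Establishing that such a congruence is well defined and $\mathbf V$-saturated is the main obstacle of the whole argument: the naive recipe ``kill every non-block-power word'' does not even produce an ideal (for instance $x_1x_2$ would be killed while its multiple $x_1\cdots x_n$ is not), so the collapsing must be arranged with genuine care.

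The second substantial direction is $(b)\Rightarrow(a)$. I would first normalize: substituting every variable of $\mathbf v$ outside $\{x_1,\dots,x_n\}$ to $x_1$ preserves $\ell(\mathbf v)>n$, so both sides may be assumed to have content in $\{x_1,\dots,x_n\}$. Collapsing all $x_k$ to a single variable already gives the one-variable consequence $x^{n}\approx x^{L}$ with $L>n$, hence $a^{n}=0$ for every element of a nil-semigroup in $\mathbf V$; this is the \emph{bounded element-index} statement. The crux, and where the naive approach stalls, is that bounded element-index does not by itself give bounded nilpotency of \emph{products}: vanishing of $n$-th powers is strictly weaker than vanishing of arbitrary products of $n$ elements. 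To bridge the gap I would iterate the length-increasing identity inside the relatively free semigroup and extract, by a pigeonhole/Ramsey argument, a high \emph{contiguous} power $u^{M}$ inside every sufficiently long word; such a factor vanishes in any nil-semigroup, forcing $S^{n}=0$. Producing a contiguous (not merely frequent) repetition from a generic length-increasing identity is the delicate point of this direction.
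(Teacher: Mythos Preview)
The paper does not actually prove this lemma: it records that the equivalence of a) and c) is \cite[Proposition~2.11]{Vernikov-08}, that c)~$\Rightarrow$~b) is evident, and that b)~$\Rightarrow$~a) follows from \cite[Lemma~1]{Sapir-Sukhanov-81}. So there is no in-paper argument to compare against; the authors treat the result as known.

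Your proposal, by contrast, tries to supply a self-contained proof, and the parts you do complete are fine: $(c)\Rightarrow(b)$ is trivial, and your side remark $(c)\Rightarrow(a)$ via iterating $pqr=pq^{\ell}r$ to $pq^{\ell^k}r$ in a nil-semigroup is correct and clean. However, in both substantive directions you explicitly stop short of a proof. For $(a)\Rightarrow(c)$ you correctly identify that the naive Rees quotient only yields \emph{some} length-increasing identity, and you then say that the finer congruence needed to force the block-power shape ``must be arranged with genuine care'' without actually arranging it; as you note, the obvious candidate is not even an ideal, so this is a real gap, not a routine verification. For $(b)\Rightarrow(a)$ you reduce to showing that a length-increasing identity on $x_1\cdots x_n$ forces $S^n=0$ in every nil-semigroup of $\mathbf V$, observe correctly that the one-variable consequence $x^n\approx x^L$ is insufficient, and then gesture at a pigeonhole/Ramsey extraction of a contiguous high power---again naming the obstacle (``producing a contiguous repetition\dots is the delicate point'') without overcoming it.

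In short: what you have is an honest outline that locates the two hard steps precisely, but it is not a proof. The paper's solution to this is simply to invoke the literature where those hard steps are carried out; if you want a self-contained argument you will need to actually build the congruence for $(a)\Rightarrow(c)$ and carry out the combinatorial extraction for $(b)\Rightarrow(a)$, or else follow the paper and cite \cite{Vernikov-08} and \cite{Sapir-Sukhanov-81}.
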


The following claim is evident.

\begin{lemma}
\label{deg of meet}
If $\mathbf X$ and $\mathbf Y$ are semigroup varieties then
$$
\deg(\mathbf{X\wedge Y})=\min\{\deg(\mathbf X),\deg(\mathbf Y)\}.\eqno{\qed}
$$
\end{lemma}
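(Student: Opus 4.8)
The plan is to reduce everything to the characterization of degree by identities in Lemma~\ref{fin deg}, combined with the standard fact that the meet of two varieties corresponds to the join of their fully invariant congruences on $F$. Writing $m=\min\{\deg(\mathbf X),\deg(\mathbf Y)\}$, I will prove the equivalent statement that, \emph{for every natural $n$}, one has $\deg(\mathbf{X\wedge Y})\le n$ if and only if $\deg(\mathbf X)\le n$ or $\deg(\mathbf Y)\le n$; ranging over all $n$ this yields $\deg(\mathbf{X\wedge Y})=m$, including the case $m=\infty$. The backward implication is pure monotonicity: since $\mathbf{X\wedge Y}\subseteq\mathbf X$ and $\mathbf{X\wedge Y}\subseteq\mathbf Y$, every nil-semigroup lying in the meet lies in each factor, whence $\deg(\mathbf{X\wedge Y})\le\deg(\mathbf X)$ and $\deg(\mathbf{X\wedge Y})\le\deg(\mathbf Y)$.

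For the forward implication, suppose $\deg(\mathbf{X\wedge Y})\le n$. By Lemma~\ref{fin deg} the meet satisfies $x_1x_2\cdots x_n\approx\mathbf v$ for some word $\mathbf v$ with $\ell(\mathbf v)>n$. The fully invariant congruence of $\mathbf{X\wedge Y}$ is the join, hence the transitive closure of the union, of those of $\mathbf X$ and $\mathbf Y$; so there is a chain $x_1\cdots x_n=\mathbf u_0,\mathbf u_1,\dots,\mathbf u_k=\mathbf v$ in which each consecutive pair satisfies $\mathbf X\models\mathbf u_t\approx\mathbf u_{t+1}$ or $\mathbf Y\models\mathbf u_t\approx\mathbf u_{t+1}$. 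Call a word a \emph{permutation word} if its letters are exactly $x_1,\dots,x_n$, each occurring once. The word $\mathbf u_0$ is a permutation word, while $\mathbf u_k=\mathbf v$ is not, being too long. Choosing the largest $t$ with $\mathbf u_t$ a permutation word and renaming the variables by the bijection that carries $\mathbf u_t$ to $x_1\cdots x_n$, I obtain that one of $\mathbf X,\mathbf Y$, say $\mathbf X$, satisfies an identity $x_1\cdots x_n\approx\mathbf w$ in which $\mathbf w$ is \emph{not} a permutation word.

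It then remains to prove the key claim: if a variety $\mathbf W$ satisfies $x_1\cdots x_n\approx\mathbf w$ with $\mathbf w$ not a permutation word, then $\deg(\mathbf W)\le n$. If $\con(\mathbf w)\supseteq\{x_1,\dots,x_n\}$, then $\mathbf w$, not being a permutation word, must have $\ell(\mathbf w)>n$, and Lemma~\ref{fin deg} applies at once. Otherwise $\mathbf w$ omits some variable $x_i$, and I will evaluate the identity in an arbitrary nil-semigroup $S\in\mathbf W$. For $a_1,\dots,a_n\in S$ set $c=a_1\cdots a_n$; substituting $a_j$ for $x_j$ with $j\ne i$ and $c$ for $x_i$ leaves the right-hand side equal to $c$ (since $x_i$ is absent from $\mathbf w$) and turns the left-hand side into $a_1\cdots a_{i-1}\,c\,a_{i+1}\cdots a_n$, so that $c=a_1\cdots a_{i-1}\,c\,a_{i+1}\cdots a_n$. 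Iterating this equality and using that, as $1\le i\le n$ and $n\ge 2$ (the case $n=1$ being immediate), at least one of the one-sided factors $a_1\cdots a_{i-1}$ or $a_{i+1}\cdots a_n$ is a nonempty product and hence becomes $0$ in the nil-semigroup $S$, I get $c=0$. Thus every product of $n$ elements of $S$ vanishes, so $\deg(\mathbf W)\le n$.

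The main obstacle is the forward implication, and within it the control of the join of the two fully invariant congruences: the connecting chain may a priori wander through words shorter than $n$ before growing again, so one cannot simply transfer the degree-bounding identity to a single factor. The device that tames this is to single out the precise step at which the chain leaves the set of permutation words, and then, in the omitted-variable case, the substitution $x_i\mapsto c$ that forces the critical product $c$ to absorb a vanishing one-sided factor.
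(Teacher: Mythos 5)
Your proof is correct. The paper offers no argument for this lemma at all --- it is introduced with ``The following claim is evident'' --- so there is no official proof to compare against; but only the inequality $\deg(\mathbf{X\wedge Y})\le\min\{\deg(\mathbf X),\deg(\mathbf Y)\}$ is truly immediate (monotonicity, as you note), and the reverse inequality does require something like your argument: one must explain why an identity $x_1x_2\cdots x_n\approx\mathbf v$ with $\ell(\mathbf v)>n$ holding in the meet forces the bound $\deg\le n$ in at least one of the two factors. Your device of passing to a deduction chain in the join of the two fully invariant congruences and cutting it at the last permutation word is the right one, and your key claim is correct. Two small remarks. First, the phrase ``at least one of the one-sided factors \dots is a nonempty product and hence becomes $0$'' is loosely worded: a nonempty product in a nil-semigroup need not be $0$. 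What you actually use is that iteration gives $c=(a_1\cdots a_{i-1})^k\,c\,(a_{i+1}\cdots a_n)^k$ for every $k$, and a sufficiently high power of the fixed element $a_1\cdots a_{i-1}$ (or $a_{i+1}\cdots a_n$) vanishes because $S$ is nil; this should be said explicitly. Second, the omitted-variable case of your key claim can be settled without evaluating in nil-semigroups at all: if $x_i\notin\con(\mathbf w)$, substitute $x_iy$ for $x_i$ (with $y$ a fresh variable) in $x_1x_2\cdots x_n\approx\mathbf w$; the right-hand side is unchanged, so $\mathbf W$ satisfies $x_1x_2\cdots x_n\approx\mathbf w\approx x_1\cdots x_{i-1}(x_iy)x_{i+1}\cdots x_n$, which is an identity of the form in claim~b) of Lemma~\ref{fin deg} and yields $\deg(\mathbf W)\le n$ uniformly with your first case.
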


\begin{lemma}[{\cite[Lemma~2.13]{Vernikov-08}}]
\label{deg of join}
If $\mathbf X$ is a semigroup variety and $\mathbf Y$ is a nil-variety of semigroups then $\deg(\mathbf{X\vee Y})=\max\{\deg(\mathbf X),\deg(\mathbf Y)\}$.\qed
\end{lemma}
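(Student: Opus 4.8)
The plan is to prove the two inequalities $\deg(\mathbf{X\vee Y})\ge\max\{\deg(\mathbf X),\deg(\mathbf Y)\}$ and $\deg(\mathbf{X\vee Y})\le\max\{\deg(\mathbf X),\deg(\mathbf Y)\}$ separately; put $m=\max\{\deg(\mathbf X),\deg(\mathbf Y)\}$. The first inequality is the routine half: since $\mathbf X,\mathbf Y\subseteq\mathbf{X\vee Y}$ and, by Lemma~\ref{fin deg}, the condition $\deg(\mathbf V)\le n$ is equivalent to the satisfaction of a fixed identity, the degree is monotone with respect to inclusion of varieties (a smaller variety satisfies more identities). Hence $\deg(\mathbf{X\vee Y})$ dominates both $\deg(\mathbf X)$ and $\deg(\mathbf Y)$. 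If $m=\infty$ this already settles the lemma, so from now on I assume $m<\infty$.

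For the reverse inequality I would produce a single identity of the form required by Lemma~\ref{fin deg}b), with parameter $m$, holding in $\mathbf{X\vee Y}$. Since $\deg(\mathbf X)\le m$, Lemma~\ref{fin deg} supplies a word $\mathbf a$ with $\ell(\mathbf a)>m$ such that $\mathbf X\models x_1x_2\cdots x_m\approx\mathbf a$. An identity holds in $\mathbf{X\vee Y}$ exactly when it holds in both $\mathbf X$ and $\mathbf Y$, so it suffices to check that $\mathbf Y$ too satisfies $x_1x_2\cdots x_m\approx\mathbf a$. Here the nil hypothesis enters: I claim that $\mathbf Y\models x_1x_2\cdots x_m\approx 0$, and hence that every word of length at least $m$ equals $0$ in the relatively free semigroup of $\mathbf Y$ (any such word factors through a length-$m$ prefix, which is a substitution instance of $x_1\cdots x_m$). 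In particular both $x_1\cdots x_m$ and $\mathbf a$ are equal to $0$ there, which gives $\mathbf Y\models x_1\cdots x_m\approx\mathbf a$. Combining the two satisfactions yields $\mathbf{X\vee Y}\models x_1\cdots x_m\approx\mathbf a$, and Lemma~\ref{fin deg} then returns $\deg(\mathbf{X\vee Y})\le m$, completing the argument.

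The heart of the proof, and the step I expect to be the main obstacle, is the claim $\mathbf Y\models x_1x_2\cdots x_m\approx 0$ for a nil-variety $\mathbf Y$ of degree at most $m$. To prove it I would apply the equivalence a)$\,\Leftrightarrow\,$c) of Lemma~\ref{fin deg} to $\mathbf Y$, obtaining an identity of the shape~\eqref{=l} with $n=m$, say $\mathbf L\mathbf p\mathbf R\approx\mathbf L\mathbf p^\ell\mathbf R$, where $\mathbf L=x_1\cdots x_{i-1}$, $\mathbf p=x_i\cdots x_j$, $\mathbf R=x_{j+1}\cdots x_m$ and $\ell>1$. A substitution that prepends a power of $\mathbf p$ to $\mathbf R$ lets me reapply this identity, and an easy induction on $k$ then shows $\mathbf Y\models\mathbf L\mathbf p\mathbf R\approx\mathbf L\mathbf p^{\,1+k(\ell-1)}\mathbf R$ for every $k\ge0$. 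Because $\mathbf Y$ is a nil-variety, the element represented by $\mathbf p$ in the relatively free semigroup of $\mathbf Y$ is nilpotent, so $\mathbf p^{\,t}$ equals the zero element for some $t$; taking $k$ large enough that $1+k(\ell-1)\ge t$ forces $x_1\cdots x_m=\mathbf L\mathbf p\mathbf R$ to be $0$, as required. I would need to be a little careful about the degenerate cases in which $\mathbf L$ or $\mathbf R$ is empty (adjusting on which side the power of $\mathbf p$ is inserted) and about justifying that a nil-variety really does carry the absorbing zero in its relatively free semigroup, but these are routine. The genuine content is the interplay between the length-increasing identity furnished by finite degree and the nilpotency furnished by the nil hypothesis.
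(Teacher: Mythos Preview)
The paper does not actually supply a proof of this lemma: it is quoted verbatim from~\cite[Lemma~2.13]{Vernikov-08} and closed with a bare \qed. So there is nothing in the present paper to compare your argument against.

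Your proof is correct. One remark: the step you flag as ``the main obstacle'' can be shortened considerably. You derive $\mathbf Y\models x_1x_2\cdots x_m\approx 0$ by taking an identity of type~\eqref{=l}, iterating it to pump up the power of $\mathbf p$, and then invoking nilpotency of $\mathbf p$ in the relatively free semigroup. That works, but it is simpler to read it straight off the definition of degree: since $\mathbf Y$ is a nil-variety, every semigroup in $\mathbf Y$ is a nil-semigroup, and $\deg(\mathbf Y)\le m$ then says each of them is nilpotent of index at most $m$, i.e., satisfies $x_1x_2\cdots x_m\approx 0$. This bypasses Lemma~\ref{fin deg}\,c) entirely for the $\mathbf Y$ half and also makes the ``does the free object have a zero'' worry disappear. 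The rest of your argument (monotonicity for $\ge$, the word $\mathbf a$ from $\mathbf X$, and the observation that any word of length $\ge m$ is zero in $\mathbf Y$) is exactly right.
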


We need the following two well known and easily verified technical remarks about identities of nilsemigroups.

\begin{lemma}
\label{split}
Let $\mathbf V$ be a nil-variety of semigroups.
\begin{itemize}
\item[\textup{(i)}] If the variety $\mathbf V$ satisfies an identity $\mathbf{u\approx v}$ with $\con(\mathbf u)\ne\con(\mathbf v)$ then $\mathbf V$ satisfies also the identity $\mathbf u\approx 0$.
\item[\textup{(ii)}] If the variety $\mathbf V$ satisfies an identity of the form $\mathbf{u\approx vuw}$ where at least one the words $\mathbf v$ and $\mathbf w$ is non-empty then $\mathbf V$ satisfies also the identity $\mathbf u\approx 0$.\qed
\end{itemize}
\end{lemma}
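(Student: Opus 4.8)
The plan is to derive both statements from the standard fact that a nil-variety $\mathbf V$ satisfies an identity $x^n\approx 0$ for some natural $n$. I would first recall the reason: the free monogenic semigroup of $\mathbf V$ lies in $\mathbf V$ and is therefore nil; a monogenic nil-semigroup is finite and nilpotent, so $x^m=0$ holds in it for some $m$, which gives the one-variable identity $\mathbf V\models x^m\approx x^{m+1}$ and hence $\mathbf V\models x^m\approx x^{m+k}$ for all $k$. Since every member of $\mathbf V$ is nil, each element $a$ of each $S\in\mathbf V$ satisfies $a^m=a^{m+k}=0$ for large $k$, whence $a^m$ is the zero of $S$ and $\mathbf V\models x^m\approx 0$. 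Writing $n$ for this $m$, substituting $x\mapsto\mathbf w$ into $x^n\approx 0$ then yields $\mathbf w^n\approx 0$ for every non-empty word $\mathbf w$, and, by the convention on the symbol $0$, any word having such a $\mathbf w^n$ as a factor is itself $\approx 0$.

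For part~(i), since $\con(\mathbf u)\ne\con(\mathbf v)$ I would pick a variable $x$ lying in exactly one of the two contents and substitute $x\mapsto x^n$ in $\mathbf u\approx\mathbf v$. If $x\in\con(\mathbf u)\setminus\con(\mathbf v)$, the right-hand side is unchanged while the left-hand side acquires $x^n$ as a factor and is thus $\approx 0$; therefore $\mathbf V\models\mathbf v\approx 0$, and so $\mathbf u\approx\mathbf v\approx 0$. If instead $x\in\con(\mathbf v)\setminus\con(\mathbf u)$, the same substitution leaves $\mathbf u$ untouched and turns $\mathbf v$ into a word $\approx 0$, giving $\mathbf u\approx 0$ directly.

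For part~(ii), I would iterate the identity $\mathbf u\approx\mathbf v\mathbf u\mathbf w$ by repeatedly replacing the interior occurrence of $\mathbf u$, obtaining $\mathbf u\approx\mathbf v^k\mathbf u\mathbf w^k$ for every $k\ge 1$, and then take $k=n$. As at least one of $\mathbf v,\mathbf w$ is non-empty, say $\mathbf v$, the preliminary remark gives $\mathbf v^n\approx 0$, so the word $\mathbf v^n\mathbf u\mathbf w^n$ has a factor that is $\approx 0$ and is therefore $\approx 0$ itself; hence $\mathbf u\approx 0$. The case in which $\mathbf w$ is non-empty is handled symmetrically.

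I do not expect a serious obstacle, as this is a routine technical lemma; the one point needing care is the uniformity of the nilpotency bound, that is, the passage from ``each element of each member of $\mathbf V$ is nilpotent'' to a single $n$ with $\mathbf V\models x^n\approx 0$. Once that is established, both parts reduce to the elementary substitution arguments above.
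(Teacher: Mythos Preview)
The paper does not actually prove this lemma: it is introduced as ``well known and easily verified'' and carries a bare \qed. Your argument is correct and supplies exactly the kind of routine verification the authors had in mind. The one place you flagged as delicate --- obtaining a uniform $n$ with $\mathbf V\models x^n\approx 0$ --- is handled properly: from the free monogenic $\mathbf V$-semigroup you get $x^m\approx x^{m+1}$, and then the nil hypothesis on arbitrary members of $\mathbf V$ forces $a^m$ to be the zero, yielding $x^m\approx 0$ as an identity. After that, parts~(i) and~(ii) are the straightforward substitutions you describe. There is nothing to compare against in the paper itself.
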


The first statement of the following lemma is generally known (see~\cite[Section~1]{Shevrin-Vernikov-Volkov-09}, for instance). The second claim also is well known and is verified explicitly in~\cite[Proposition~2.4]{Volkov-05} (see also~\cite[Section~14]{Shevrin-Vernikov-Volkov-09}).

\begin{lemma}
\label{SL is neutral atom}
The variety $\mathbf{SL}$ is
\begin{itemize}
\item[\textup{(i)}] an atom of the lattice $\mathbf{SEM}$;
\item[\textup{(ii)}] a neutral element of $\mathbf{SEM}$.\qed
\end{itemize}
\end{lemma}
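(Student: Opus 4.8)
The plan is to establish the two claims separately, the first by a direct minimality argument and the second by verifying the defining identity of a neutral element through a short case analysis. For~(i) I would first record that the equational theory of $\mathbf{SL}$ consists exactly of the identities $\mathbf u\approx\mathbf v$ with $\con(\mathbf u)=\con(\mathbf v)$: in a commutative idempotent semigroup every word may be rewritten as the product, in any order, of the distinct variables it contains. Now suppose $\mathbf T\subsetneq\mathbf V\subseteq\mathbf{SL}$. Being proper in $\mathbf{SL}$, the variety $\mathbf V$ satisfies some identity $\mathbf u\approx\mathbf v$ that fails in $\mathbf{SL}$, so $\con(\mathbf u)\ne\con(\mathbf v)$; say a variable $z$ lies in $\con(\mathbf u)\setminus\con(\mathbf v)$. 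Substituting one letter for $z$ and a single other letter for every remaining variable and using commutativity and idempotency, this identity collapses within $\mathbf V$ to an absorption law $xy\approx x$ (or directly to $x\approx y$), which together with commutativity forces $x\approx y$ and hence $\mathbf V=\mathbf T$, a contradiction. Thus $\mathbf{SL}$ has no proper nontrivial subvariety and is an atom.

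For~(ii) the plan is to verify the neutrality identity
$$
(\mathbf{SL}\vee\mathbf Y)\wedge(\mathbf Y\vee\mathbf Z)\wedge(\mathbf Z\vee\mathbf{SL})=(\mathbf{SL}\wedge\mathbf Y)\vee(\mathbf Y\wedge\mathbf Z)\vee(\mathbf Z\wedge\mathbf{SL})
$$
for arbitrary varieties $\mathbf Y$ and $\mathbf Z$. Since $\mathbf{SL}$ is an atom by~(i), the meet $\mathbf{SL}\wedge\mathbf X$ equals $\mathbf{SL}$ when $\mathbf{SL}\subseteq\mathbf X$ and equals $\mathbf T$ otherwise, so I would split into the three cases determined by which of $\mathbf Y,\mathbf Z$ contain $\mathbf{SL}$. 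In every case the inclusion $\supseteq$ is immediate, since $\mathbf Y\wedge\mathbf Z$ lies below each factor on the left; the case where both contain $\mathbf{SL}$ is then settled at once. The remaining two cases reduce to distributivity statements of the shape
$$
\mathbf Y\wedge(\mathbf Z\vee\mathbf{SL})\subseteq\mathbf{SL}\vee(\mathbf Y\wedge\mathbf Z)
$$
together with their symmetric analogues, the reverse inclusions carrying all the substantive content.

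To prove these I would exploit the explicit description of joins with $\mathbf{SL}$ furnished by the content-balanced theory, namely $\mathrm{Id}(\mathbf X\vee\mathbf{SL})=\{\mathbf u\approx\mathbf v\in\mathrm{Id}(\mathbf X):\con(\mathbf u)=\con(\mathbf v)\}$. The crux is a separation principle for identities: relative to $\mathbf Y$, the constraint of lying in $\mathbf Z\vee\mathbf{SL}$ should be decomposed into a content-balanced part, which is already governed by $\mathbf{SL}$, and a content-unbalanced remainder, which is governed by $\mathbf Z$ and hence absorbed into $\mathbf Y\wedge\mathbf Z$ (a statement in the spirit of Lemma~\ref{split} being the relevant tool). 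I expect this separation step to be the main obstacle, since it is precisely where the combinatorial role of the content function must be exploited and where one must argue on the semigroups themselves rather than formally in the lattice; it is this computation that is carried out in the cited Proposition~2.4 of~\cite{Volkov-05}.
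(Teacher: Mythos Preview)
The paper does not give a proof of this lemma at all: it records both claims as well known and simply cites \cite{Shevrin-Vernikov-Volkov-09} for~(i) and \cite[Proposition~2.4]{Volkov-05} for~(ii), marking the statement with a \qed. Your sketch therefore already goes further than the paper's own treatment.

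Your argument for~(i) is correct. For~(ii) your plan is sound: the identity $\mathrm{Id}(\mathbf X\vee\mathbf{SL})=\{\mathbf u\approx\mathbf v\in\mathrm{Id}(\mathbf X):\con(\mathbf u)=\con(\mathbf v)\}$ is indeed the engine behind the result, and the case split according to which of $\mathbf Y,\mathbf Z$ contain $\mathbf{SL}$ is the natural way to organise the verification. You are also right that the substantive work sits in the content-separation step, and that this is precisely what is carried out in \cite[Proposition~2.4]{Volkov-05}; the paper defers to the same reference. One small remark: the allusion to Lemma~\ref{split} is a bit off-target, since that lemma is specific to nil-varieties and plays no role here; the relevant mechanism is purely the content description of $\mathrm{Id}(\mathbf X\vee\mathbf{SL})$, not a $0$-absorption argument.
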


\section{Proof of the main result}
\label{proof}

In this section we prove Theorem~\ref{main}. The implication a)~$\longrightarrow$~b) is evident, while the equivalence of the claims b) and c) is checked in~\cite[Theorem~3.1]{Vernikov-07}. It remains to prove the implication c)~$\longrightarrow$~a). Lemmas~\ref{join with neutral atom} and~\ref{SL is neutral atom} allow us to assume that $\mathbf V=\mathbf N$. Suppose that \textbf N is non-cancellable element of \textbf{SEM}. Hence there are semigroup varieties \textbf Y and \textbf Z with $\mathbf{N\vee Y}=\mathbf{N\vee Z}$, $\mathbf{N\wedge Y}=\mathbf{N\wedge Z}$ and $\mathbf{Y\ne Z}$.

\begin{lemma}
\label{deg Y=deg Z}
$\deg(\mathbf Y)=\deg(\mathbf Z)$.
\end{lemma}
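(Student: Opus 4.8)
The plan is to read off both a minimum and a maximum relation between the relevant degrees from the two degree lemmas, and then to finish by an elementary argument inside the chain of possible values of $\deg$.

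First I would observe that $\mathbf N$ is a nil-variety. Indeed, substituting $y:=x$ in the identity $x^2y\approx0$ yields $x^3\approx0$, so every element of every semigroup in $\mathbf N$ is nilpotent with respect to a genuine zero; hence $\mathbf N$ consists of nil-semigroups. This is precisely the hypothesis needed to apply Lemma~\ref{deg of join} with $\mathbf N$ in the role of the nil-variety.

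Next, from $\mathbf{N\wedge Y}=\mathbf{N\wedge Z}$ and Lemma~\ref{deg of meet} I obtain
$$
\min\{\deg(\mathbf N),\deg(\mathbf Y)\}=\min\{\deg(\mathbf N),\deg(\mathbf Z)\},
$$
while from $\mathbf{N\vee Y}=\mathbf{N\vee Z}$ and Lemma~\ref{deg of join} (applicable because $\mathbf N$ is a nil-variety) I obtain
$$
\max\{\deg(\mathbf N),\deg(\mathbf Y)\}=\max\{\deg(\mathbf N),\deg(\mathbf Z)\}.
$$
Writing $d=\deg(\mathbf N)$, $p=\deg(\mathbf Y)$, $q=\deg(\mathbf Z)$ as elements of the chain of extended naturals (so that $\infty$ sits at the top), I would then finish by a short case analysis. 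After relabelling so that $p\le q$, the three exhaustive cases $q\le d$, $d\le p$ and $p\le d\le q$ each force $p=q$: in the first the meet equality gives $p=q$, in the second the join equality gives $p=q$, and in the third the meet equality gives $p=d$ and the join equality gives $d=q$, whence $p=d=q$. Thus $\deg(\mathbf Y)=\deg(\mathbf Z)$.

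There is no genuine obstacle here. Once $\mathbf N$ is recognised as a nil-variety, the statement is an immediate consequence of Lemmas~\ref{deg of meet} and~\ref{deg of join} together with the elementary fact that every element of a chain is cancellable with respect to $\min$ and $\max$. The only points requiring attention are verifying the nil-hypothesis of Lemma~\ref{deg of join} and noting that the argument must accommodate possibly infinite degrees, which the case analysis handles uniformly.
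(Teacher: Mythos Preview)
Your proof is correct and follows essentially the same approach as the paper: both arguments rest on Lemmas~\ref{deg of meet} and~\ref{deg of join} and a short case analysis on the position of $\deg(\mathbf N)$ relative to $\deg(\mathbf Y)$ and $\deg(\mathbf Z)$. The only cosmetic difference is that you first record the two equalities $\min\{d,p\}=\min\{d,q\}$ and $\max\{d,p\}=\max\{d,q\}$ and then argue directly, whereas the paper proceeds by contradiction; your explicit check that $\mathbf N$ is a nil-variety (needed for Lemma~\ref{deg of join}) is a nice touch that the paper leaves implicit.
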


\begin{proof}
Put $\deg(\mathbf Y)=r$, $\deg(\mathbf Z)=s$ and $\deg(\mathbf N)=t$ (here $r,s,t\in\mathbb N\cup\{\infty\}$). Suppose that $r\ne s$. We can assume without any loss that $r<s$. Then Lemmas~\ref{deg of meet} and~\ref{deg of join} imply that
\begin{itemize}
\item[] if $t\ge s$ then $\deg(\mathbf{N\wedge Y})=r<s=\deg(\mathbf{N\wedge Z})$;
\item[] if $r<t<s$ then $\deg(\mathbf{N\wedge Y})=r<t=\deg(\mathbf{N\wedge Z})$;
\item[] if $t\le r$ then $\deg(\mathbf{N\vee Y})=r<s=\deg(\mathbf{N\vee Z})$.
\end{itemize}
The first and the second cases contradict the equality $\mathbf{N\wedge Y}=\mathbf{N\wedge Z}$, while the third case is impossible because $\mathbf{N\vee Y}=\mathbf{N\vee Z}$.
\end{proof}

Since the claims~b) and~c) of Theorem~\ref{main} are equivalent, \textbf N is a modular element of \textbf{SEM}. In view of Lemma~\ref{modular non-cancellable}, there is a variety $\mathbf N'$ such that
$$
\mathbf{N'\subseteq N},\,\mathbf{N'\vee Y}=\mathbf{N'\vee Z}=\mathbf{Y\vee Z}\text{ and }\mathbf{N'\wedge Y}=\mathbf{N'\wedge Z}.
$$
Being a subvariety of \textbf N, the variety $\mathbf N'$ satisfies the identities $x^2y\approx 0$ and $xy\approx yx$.

Since $\mathbf{Y\ne Z}$, we can assume without loss of generality that there is an identity $\mathbf{u\approx v}$ that holds in \textbf Y but is false in \textbf Z. If this identity is satisfied by the variety $\mathbf N'$ then it holds in $\mathbf{N'\vee Y}=\mathbf{N'\vee Z}$, and therefore, in \textbf Z. Thus, $\mathbf{u\approx v}$ is wrong in $\mathbf N'$. A word \textbf w is called \emph{linear} if any variable occurs in \textbf w at most once. Recall that $\mathbf N'$ satisfies the identities $x^2y\approx 0$ and $xy\approx yx$. Therefore, any non-linear word except $x^2$ equals to~0 in $\mathbf N'$. Thus, we may assume without loss of generality that either $\mathbf u=x^2$ or $\mathbf u=x_1x_2\cdots x_k$ for some $k$. Lemmas~\ref{over neutral atom} and~\ref{SL is neutral atom} allow us to assume that $\mathbf Y,\mathbf{Z\supseteq SL}$. This implies that $\con(\mathbf u)=\con(\mathbf v)$. Combining the observations given above, we have that $\mathbf{u\approx v}$ is either an identity of the form $x^2\approx x^m$ for some $m\ne 2$ or an identity of the form $x_1x_2\cdots x_k\approx\mathbf v$ where $\con(\mathbf v)=\{x_1,x_2,\dots,x_k\}$.

\smallskip

\emph{Case} 1: $\mathbf{u\approx v}$ is an identity of the form $x^2\approx x^m$ for some $m\ne 2$. Suppose at first that $m=1$. This means that \textbf Y is a variety of bands. Then $\mathbf{Z\wedge N'}=\mathbf{Y\wedge N'}=\mathbf T$. If $\mathbf N'=\mathbf T$ then $\mathbf Y=\mathbf{Y\vee N'}=\mathbf{Z\vee N'}=\mathbf Z$, and we are done. Otherwise, $\mathbf N'$ contains the variety \textbf{ZM} of all semigroups with zero multiplication. Since $\mathbf{Z\wedge N'}=\mathbf T$, we have that $\mathbf{Z\nsupseteq ZM}$, whence the variety \textbf Z is completely regular. If \textbf Z contains a non-trivial group variety \textbf G then $\mathbf{G\subseteq Z\vee N'}=\mathbf{Y\vee N'}$. But all groups in $\mathbf{Y\vee N'}$ are trivial because this variety satisfies the identity $x^3\approx x^4$. Thus, \textbf Z is a completely regular variety without non-trivial groups, i.e., a band variety. We see that the identity $\mathbf{u\approx v}$ holds in \textbf Z, a contradiction.

Let now $m>2$. If $\mathbf N'$ satisfies the identity $x^2\approx 0$ then the identity $x^2\approx x^m$ holds in the variety $\mathbf{N'\vee Y}=\mathbf{N'\vee Z}$, and therefore, in \textbf Z. But this contradicts the choice of the identity $\mathbf{u\approx v}$. Thus we can assume that the identity $x^2\approx 0$ is wrong in $\textbf N'$. Recall that a word \textbf w is called an \emph{isoterm for a variety} \textbf V if \textbf V does not satisfy any non-trivial identity of the form $\mathbf{w\approx w}'$. Lemma~\ref{split} implies that the word $x^2$ is an isoterm for the variety $\mathbf N'$. Further, Lemma~\ref{split}(ii) implies that the variety $\mathbf{N'\wedge Z}=\mathbf{N'\wedge Y}$ satisfies the identity $x^2\approx 0$. Therefore, there is a deduction of this identity from identities of the varieties $\mathbf N'$ and \textbf Z. In particular, one of these varieties satisfies a non-trivial identity of the form $x^2\approx\mathbf w$. Since $x^2$ is an isoterm for $\mathbf N'$, this identity holds in \textbf Z. Since $\mathbf{Z\supseteq SL}$, this identity has the form $x^2\approx x^k$ for some $k>2$. Let $m$ be the least number with the property that $x^2\approx x^m$ holds in \textbf Y but does not hold in \textbf Z, while $k$ the least number with the property that $x^2\approx x^k$ holds in \textbf Z.

Suppose that $k<m$. Then $m=k+j$ for some natural $j$. It is clear that the identity $x^{2+j}\approx x^{k+j}=x^m$ holds in $\mathbf N'$. Then this identity is true also in $\mathbf{Z\vee N'}=\mathbf{Y\vee N'}$. Hence $x^{2+j}\approx x^m\approx x^2$ holds in \textbf Y. Since $2+j<m$, this contradicts the choice of $m$.

Finally, let $m<k$. Then $k=m+j$ for some natural $j$. Clearly, the identity $x^{2+j}\approx x^{m+j}=x^k$ holds in $\mathbf N'$. Therefore, this identity holds in $\mathbf{Y\vee N'}=\mathbf{Z\vee N'}$. This means that \textbf Z satisfies the identities $x^{2+j}\approx x^k\approx x^2$. But $2+j<m+j=k$ and we have a contradiction with the choice of $k$.

\smallskip

\emph{Case} 2: $\mathbf{u\approx v}$ is an identity of the form $x_1x_2\cdots x_k\approx\mathbf v$ where $\con(\mathbf v)=\{x_1,x_2,\dots,x_k\}$. Clearly, $\ell(\mathbf v)\ge k$. If $\ell(\mathbf v)=k$ then the identity $\mathbf{u\approx v}$ has the form
$$
x_1x_2\cdots x_k\approx x_{1\pi}x_{2\pi}\cdots x_{k\pi}
$$
where $\pi$ is a non-trivial permutation on the set $\{1,2,\dots, k\}$. This identity holds in $\mathbf N'$ because $\mathbf N'$ is commutative. But this is false. Therefore, $\ell(\mathbf v)>k$. Put $\deg(\mathbf Y)=n$. Then $\deg(\mathbf Z)=\deg(\mathbf Y)=n$ by Lemma~\ref{deg Y=deg Z}. Lemma~\ref{fin deg} implies that $n\le k$. Recall that $\mathbf{Y\vee Z}=\mathbf{N'\vee Y}=\mathbf{N'\vee Z}$. Clearly, $\deg(\mathbf Y\vee\mathbf Z)\ge n$. Suppose at first that $\deg(\mathbf Y\vee\mathbf Z)=n$. Then
$$
\deg(\mathbf N')\le\deg(\mathbf{N'\vee Y})=\deg(\mathbf{Y\vee Z})=n.
$$
Being a nil-variety, $\mathbf N'$ satisfies the identity $x_1x_2\cdots x_n\approx 0$ in this case. Since $\ell(\mathbf v)>k\ge n$, the identity $x_1x_2\cdots x_k\approx\mathbf v$ holds in $\mathbf N'$ as well. This contradicts the choice of the identity $\mathbf{u\approx v}$.

Let now $\deg(\mathbf{Y\vee Z})>n$. Since $\deg(\mathbf Y)=n$, Lemma~\ref{fin deg} implies that \textbf Y satisfies an identity of the form~\eqref{=l} for some $\ell>1$ and $1\le i\le j\le n$. The same lemma implies that this identity is false in $\mathbf{Y\vee Z}$ because $\deg(\mathbf{Y\vee Z})=n$ otherwise. Therefore,~\eqref{=l} is wrong in \textbf Z. Analogously, there are $r>1$ and $1\le i'\le j'\le n$ such that the identity
\begin{equation}
\label{=r}
x_1x_2\cdots x_n \approx x_1x_2\cdots x_{i'-1}(x_{i'}\cdots x_{j'})^rx_{j'+1}\cdots x_n
\end{equation}
holds in \textbf Z but does not hold in \textbf Y. We will assume without any loss that $i\le i'$.

Suppose at first that $j<j'$. Then we substitute $(x_{i'}\cdots x_{j'})^{r-1}x_{j'+1}$ into $x_{j'+1}$ in~\eqref{=l} whenever $j'<n$ or multiply~\eqref{=l} by $(x_{i'}\cdots x_{j'})^{r-1}$ on the right whenever $j'=n$. We obtain the identity
\begin{equation}
\label{r=l[r-1]}
\begin{array}{rl}
&x_1x_2\cdots x_{i'-1}(x_{i'}\cdots x_{j'})^rx_{j'+1}\cdots x_n\\
\approx{}&x_1x_2\cdots x_{i-1}(x_i\cdots x_j)^\ell x_{j+1}\cdots x_{j'}(x_{i'}\cdots x_{j'})^{r-1}x_{j'+1}\cdots x_n.
\end{array}
\end{equation}
Clearly, the identity~\eqref{r=l[r-1]} holds in the variety $\mathbf N'$. Then it satisfies in \textbf Z as well because $\mathbf{N'\vee Y}=\mathbf{N'\vee Z}$. Substitute $x_{i-1}(x_i\cdots x_j)^{\ell-1}$ into $x_{i-1}$ in~\eqref{=r} whenever $i>1$ or multiply~\eqref{=r} by $(x_i\cdots x_j)^{\ell-1}$ on the left whenever $i=1$. As a result, we obtain the identity
\begin{equation}
\label{l=[l-1]r}
\begin{array}{rl}
&x_1x_2\cdots x_{i-1}(x_i\cdots x_j)^\ell x_{j+1}\cdots x_n\\
\approx{}&x_1x_2\cdots x_{i-1}(x_i\cdots x_j)^{\ell-1}x_i\cdots x_{i'-1}(x_{i'}\cdots x_{j'})^rx_{j'+1}\cdots x_n.
\end{array}
\end{equation}
This identity holds in \textbf Z too. Note that the right parts of the identities~\eqref{r=l[r-1]} and~\eqref{l=[l-1]r} coincide. Indeed,
\begin{align*}
&x_1x_2\cdots x_{i-1}(x_i\cdots x_j)^\ell x_{j+1}\cdots x_{j'}(x_{i'}\cdots x_{j'})^{r-1}x_{j'+1}\cdots x_n\\
={}&x_1x_2\cdots x_{i-1}(x_i\cdots x_j)^{\ell-1}x_i\cdots x_jx_{j+1}\cdots x_{j'}(x_{i'}\cdots x_{j'})^{r-1}x_{j'+1}\cdots x_n\\
={}&x_1x_2\cdots x_{i-1}(x_i\cdots x_j)^{\ell-1}x_i\cdots x_{i'}\cdots x_{j'}(x_{i'}\cdots x_{j'})^{r-1}x_{j'+1}\cdots x_n\\
={}&x_1x_2\cdots x_{i-1}(x_i\cdots x_j)^{\ell-1}x_i\cdots x_{i'-1}(x_{i'}\cdots x_{j'})^rx_{j'+1}\cdots x_n.
\end{align*}
Since the variety \textbf Z satisfies the identities~\eqref{=r},~\eqref{r=l[r-1]} and~\eqref{l=[l-1]r}, this variety satisfies also the identity~\eqref{=l}. We have a contradiction.

It remains to consider the case when $j'\le j$. Suppose at first that $i=i'$ and $j=j'$. Substitute $(x_{i'}\dots x_{j'})^rx_{j'+1}$ into $x_{j'+1}$ in~\eqref{=l} whenever $j'<n$ or multiply~\eqref{=l} by $(x_{i'}\cdots x_{j'})^r$ on the right whenever $j'=n$. Then we obtain the identity
\begin{equation}
\label{r=[r+l]}
x_1x_2\cdots x_{i-1}(x_i\cdots x_j)^rx_{j+1}\cdots x_n\approx x_1x_2\cdots x_{i-1}(x_i\cdots x_j)^{r+\ell}x_{j+1}\cdots x_n.
\end{equation}
Clearly, this identity holds in $\mathbf N'$. The equality $\mathbf{Y\vee N'}=\mathbf{Z\vee N'}$ implies that it holds in \textbf Z too. Similar arguments show that \textbf Z satisfies the identity
\begin{equation}
\label{l=[r+l]}
x_1x_2\cdots x_{i-1}(x_i\cdots x_j)^\ell x_{j+1}\cdots x_n\approx x_1x_2\cdots x_{i-1}(x_i\cdots x_j)^{r+\ell}x_{j+1}\cdots x_n.
\end{equation}
Combining the identities~\eqref{=r},~\eqref{r=[r+l]} and~\eqref{l=[r+l]}, we have that \textbf Z satisfies the identity~\eqref{=l}, contradicting with the choice of this identity.

Thus, either $i<i'$ or $j'< j$. Suppose without loss of generality that $i<i'$. Substitute $x_{i'-1}(x_{i'}\cdots x_{j'})^{r-1}$ into $x_{i'-1}$ in~\eqref{=l}. We obtain the identity
\begin{equation}
\label{r=[r]l}
\begin{array}{rl}
&x_1x_2\cdots x_{i'-1}(x_{i'}\cdots x_{j'})^rx_{j'+1}\cdots x_n\\
\approx{}&x_1x_2\cdots x_{i-1}(x_i\cdots x_{i'-1}(x_{i'}\cdots x_{j'})^rx_{j'+1}\cdots x_j)^\ell x_{j+1}\cdots x_n.
\end{array}
\end{equation}
Clearly, the identity~\eqref{r=[r]l} holds in the variety $\mathbf N'$. Besides that, it holds in \textbf Z because $\mathbf{N'\vee Y}=\mathbf{N'\vee Z}$. For an arbitrary word \textbf w, we suppose $\mathbf w^0$ to be the empty word. Let $t>0$ and $s\ge 0$. Now we multiply the identity~\eqref{=r} by the word
$$
(x_i\cdots x_{i'-1}(x_{i'}\cdots x_{j'})^rx_{j'+1}\cdots x_j)^s
$$
on the left whenever $i=1$ or substitute the word
$$
x_{i-1}(x_i\cdots x_{i'-1}(x_{i'}\cdots x_{j'})^rx_{j'+1}\cdots x_j)^s
$$
into $x_{i-1}$ in~\eqref{=r} whenever $i>1$. Besides that, we multiply~\eqref{=r} by the word $(x_i\cdots x_j)^{t-1}$ on the right whenever $j=n$ or substitute the word $(x_i\cdots x_j)^{t-1}x_{j+1}$ into $x_{j+1}$ in~\eqref{=r} whenever $j<n$. Then we obtain the identity{\sloppy

}
\begin{equation}
\label{(r)sl=(r)[s+1][l-1]}
\begin{array}{rl}
&x_1x_2\cdots x_{i-1}(x_i\cdots x_{i'-1}(x_{i'}\cdots x_{j'})^rx_{j'+1}\cdots x_j)^s(x_i\cdots x_j)^tx_{j+1}\cdots x_n\\
\approx{}&x_1x_2\cdots x_{i-1}(x_i\cdots x_{i'-1}(x_{i'}\cdots x_{j'})^rx_{j'+1}\cdots x_j)^{s+1}(x_i\cdots x_j)^{t-1}\cdot\\
&\cdot\,x_{j+1}\cdots x_n.
\end{array}
\end{equation}
Then the variety \textbf Z satisfies the identities
$$
\begin{array}{rcl}
x_1x_2\cdots x_n&\stackrel{\eqref{=r}}\approx&x_1x_2\cdots x_{i'-1}(x_{i'}\cdots x_{j'})^rx_{j'+1}\cdots x_n\\
&\stackrel{\eqref{r=[r]l}}\approx&x_1x_2\cdots x_{i-1}(x_i\cdots x_{i'-1}(x_{i'}\cdots x_{j'})^rx_{j'+1}\cdots x_j)^\ell x_{j+1}\cdots x_n\\
&\stackrel{\eqref{(r)sl=(r)[s+1][l-1]}}\approx&x_1x_2\cdots x_{i-1}(x_i\cdots x_{i'-1}(x_{i'}\cdots x_{j'})^rx_{j'+1}\cdots x_j)^{\ell-1}(x_i\cdots x_j)\cdot\\
&&\cdot\,x_{j+1}\cdots x_n\\
&\stackrel{\eqref{(r)sl=(r)[s+1][l-1]}}\approx&x_1x_2\cdots x_{i-1}(x_i\cdots x_{i'-1}(x_{i'}\cdots x_{j'})^rx_{j'+1}\cdots x_j)^{\ell-2}(x_i\cdots x_j)^2\cdot\\
&&\cdot\,x_{j+1}\cdots x_n\\
&\hdotsfor{2}\\
&\stackrel{\eqref{(r)sl=(r)[s+1][l-1]}}\approx&x_1x_2\cdots x_{i-1}(x_i\cdots x_{i'-1}(x_{i'}\cdots x_{j'})^rx_{j'+1}\cdots x_j)(x_i\cdots x_j)^{\ell-1}\cdot\\
&&\cdot\,x_{j+1}\cdots x_n\\
&\stackrel{\eqref{(r)sl=(r)[s+1][l-1]}}\approx&x_1x_2\cdots x_{i-1}(x_i\cdots x_j)^\ell x_{j+1}\cdots x_n.
\end{array}
$$
Here we write $\mathbf{w\stackrel{\varepsilon}\approx w'}$ in the case when the identity $\mathbf{w\approx w'}$ follows from the identity $\varepsilon$. We use the identity~\eqref{(r)sl=(r)[s+1][l-1]} for the first time with $s=\ell-1$ and $t=1$, for the second time with $s=\ell-2$ and $t=2$, \dots, for the penultimate time with $s=1$ and $t=\ell-1$, finally, for the last time with $s=0$ and $t=\ell$. We prove that the identity~\eqref{=l} holds in \textbf Z, a contradiction. This completes the proof of Theorem~\ref{main}.\qed

\medskip

At the conclusion of the article, we formulate some open questions.

\begin{question}
\label{cancellable = modular?}
Does there exist a semigroup variety that is a modular but not a cancellable element of the lattice \textbf{SEM}?
\end{question}

A semigroup variety is called 0-\emph{reduced} if it may be given by identities of the form $\mathbf w\approx 0$ only. It is known that any 0-reduced semigroup variety is a modular element of the lattice \textbf{SEM}. This fact was noted for the first time in~\cite[Corollary~3]{Vernikov-Volkov-88} and rediscovered (in different terminology) in~\cite[Proposition~1.1]{Jezek-McKenzie-93}. In actual fact, it readily follows from~\cite[Proposition~2.2]{Jezek-81}.

\begin{question}
\label{0-reduced is cancellable?}
Is any 0-reduced semigroup variety a cancellable element of the lattice \textbf{SEM}?
\end{question}

Evidently, the negative answer to Question~\ref{0-reduced is cancellable?} immediately implies the negative answer to Question~\ref{cancellable = modular?}. An affirmative answer to Question~\ref{0-reduced is cancellable?} would also have an interesting corollary. To formulate it, we recall that an element $x$ of a lattice $L$ is called \emph{lower-modular} if
$$
(\forall\,y,z\in L)\quad x\le y\longrightarrow x\vee(y\wedge z)=y\wedge(x\vee z).
$$
Lower-modular elements of the lattice \textbf{SEM} are completely determined in~\cite{Shaprynskii-Vernikov-10}. This result easily implies that if an answer to Question~\ref{0-reduced is cancellable?} is affirmative then every lower-modular element of \textbf{SEM} is cancellable.

\end{document}